\renewcommand{\epsilon}{\varepsilon}
\newtheorem{theorem}{Theorem}
\newtheorem{lemma}{Lemma}
\newtheorem{proposition}{Proposition}
\begin{document}

\author{Andriy V. Bondarenko, Danylo V. Radchenko}
\title{On a family of strongly regular graphs with $\lambda=1$}
\date{}
\maketitle
\begin{abstract}
In this paper, we give a complete description of strongly regular
graphs with parameters $((n^2+3n-1)^2,n^2(n+3),1,n(n+1))$. All
possible such graphs are: the lattice graph $L_{3,3}$ with
parameters $(9,4,1,2)$, the Brouwer-Haemers graph with parameters
$(81,20,1,6)$, and the Games graph with parameters $(729,112,1,20)$.
\end{abstract}
{\bf Keywords:} strongly regular graph, automorphism group, Brouwer-Haemers graph, Games graph\\
{\bf AMS subject classification.} 05C25, 05C50, 52C99, 41A55, 11D61

\section{Introduction}

A strongly regular graph $\Gamma$ with parameters
$(v,k,\lambda,\mu)$ is an undirected regular graph on $v$ vertices
of valency $k$ such that each pair of adjacent vertices has
$\lambda$ common neighbors, and each pair of nonadjacent vertices
has $\mu$ common neighbors. The incidence matrix $A$ of $\Gamma$ has
the following properties:
$$
AJ = kJ,
$$
and
$$
A^2 + (\mu - \lambda)A + (\mu - k)I = \mu J,
$$
where $I$ is the identity matrix and $J$ is the matrix with all
entries equal to~$1$. These conditions imply that
\begin{equation}
\label{par} (v - k - 1)\mu = k(k - \lambda - 1).
\end{equation}
Moreover, the matrix $A$ has only 3 eigenvalues: $k$ of multiplicity
$1$, one positive eigenvalue
$$
r=\frac 12\left(\lambda-\mu+\sqrt{(\lambda-\mu)^2+4(k-\mu))}\right)
$$
of multiplicity
$$
f=\frac 12
\left(v-1-\frac{2k+(v-1)(\lambda-\mu)}{\sqrt{(\lambda-\mu)^2+4(k-\mu))}}\right),
$$
and one negative eigenvalue
$$
s=\frac 12\left(\lambda-\mu-\sqrt{(\lambda-\mu)^2+4(k-\mu))}\right)
$$
of multiplicity
\begin{equation}
\label{g} g=\frac 12
\left(v-1+\frac{2k+(v-1)(\lambda-\mu)}{\sqrt{(\lambda-\mu)^2+4(k-\mu))}}\right).
\end{equation}
Clearly, both $f$ and $g$ are integers. This together
with~\eqref{par} gives a family of suitable parameters
$(v,k,\lambda,\mu)$ for strongly regular graphs. The list of all
suitable parameters and known existence results for $v\le 1300$
could be found in~\cite{B} (except for the trivial case when
$\Gamma$ is a disjoint union of complete graphs $mK_n$ or its
complement). Strongly regular graphs often appear in different areas
of group theory, geometry, and set theory. Many of strongly regular
graphs have a large automorphism group, which is the main tool to
construct them. For example, some strongly regular graphs could be
naturally obtained from a rank 3 permutation group, see~\cite{G}. On
the other hand, there are strongly regular graphs having trivial
automorphism groups. The smallest such graph has parameters
(25,12,5,6), see~\cite{CEKS}.

Arguably, the most widely known strongly regular graphs are Moore
graphs, with $\lambda=0$ and $\mu=1$. The list of all suitable
parameters for such graphs are: $(5,2,0,1)$, $(10,3,0,1)$,
$(50,7,0,1)$, and $(3250,57,0,1)$. In the first 3 cases, the graph
with mentioned parameters is unique and has an edge transitive
automorphism group. That is: the cycle graph $C_5$, the Petersen
graph, and the Hoffman-Singleton graph. The question whether exists
a strongly regular graph in the last case is a well-known open
problem posed by Hoffman and Singleton~\cite{HS}. However, Higman
proved that the automorphism group of such a graph could not be even
vertex transitive, see, for example,~\cite{C}. Later, Higman's
approach was widely generalized and applied for other graphs, see,
e.g.,~\cite{MS} and~\cite{MN}. The typical result is that, if for a
given parameter set $(v,k,\lambda,\mu)$ a strongly regular graph
exists, then it has a small automorphism group. Full description of
parameters for which a strongly regular graph exists is not likely
to be ever done.

In this paper we will investigate strongly regular graphs with
$\lambda=1$ and negative eigenspaces of dimension $g=k$. One can
deduce directly from~\eqref{par} and~\eqref{g} that such a graph is
either $K_3$ or belongs to the family
$$
((n^2+3n-1)^2,n^2(n+3),1,n(n+1)),
$$
where $n\in{\mathbb N}$ is the positive eigenvalue.

This family includes the lattice graph $L_{3,3}$ with parameters
$(9,4,1,2)$, the Brouwer-Haemers graph with parameters
$(81,20,1,6)$, which is also known to be unique~\cite{BrHa}, and the
Games graph with parameters $(729,112,1,20)$, for which the
uniqueness question was open. We will show that these are the only
graphs in the family.
\begin{theorem}
Suppose that there exists a strongly regular graph with parameters
$((n^2+3n-1)^2,n^2(n+3),1,n(n+1))$. Then $n\in\{1,2,4\}$.
\end{theorem}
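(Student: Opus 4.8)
\medskip
\noindent\emph{Outline of the argument.}
The plan is to turn the putative graph $\Gamma$ into a spherical configuration and to exploit $\lambda=1$ through the structure of the neighbourhoods. First I would use the orthogonal projection $E_s$ onto the $s$-eigenspace, whose dimension is $g=k=n^2(n+3)$, to represent the $v=(n^2+3n-1)^2$ vertices as unit vectors $u_1,\dots,u_v\in\mathbb{R}^{k}$ obtained from the rescaled columns of $E_s$. Since $E_s$ is a projection with $E_s\mathbf 1=0$, these vectors sum to zero, and their Gram matrix lies in the Bose--Mesner algebra, so $\langle u_i,u_j\rangle$ equals $1$ on the diagonal, a fixed negative value $\alpha=s/k$ on adjacent pairs, and a fixed positive value $\beta=1/(n(n+3))$ on non-adjacent pairs; here $s=-(n^2+2n-1)$. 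Thus $\{u_i\}$ is a spherical two-distance set that encodes $\Gamma$.

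Next I would analyse the neighbourhood of a vertex $x$. As each edge lies in a unique triangle, every neighbour of $x$ has exactly one further neighbour inside $N(x)$, so the subgraph induced on $N(x)$ is a perfect matching $\tfrac{k}{2}K_2$ (in particular $k$ is even). Projecting the vectors $\{u_y:y\sim x\}$ onto $u_x^{\perp}\cong\mathbb{R}^{k-1}$ and normalising, one gets $k$ unit vectors summing to zero whose ``matching graph'' is exactly this perfect matching. If $p_i$ and $d_i$ denote the sum and the difference of the two projected vectors in the $i$-th matched pair, a short computation with the three inner products shows that the $p_i$ have equal lengths, equal pairwise inner products and zero sum --- hence form a regular simplex --- while the $d_i$ are mutually orthogonal and orthogonal to every $p_j$. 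So (for $n\ge2$, the only case that matters) the neighbourhood configuration is rigid: up to isometry it depends only on $n$, it spans $u_x^{\perp}$, and it pins down how a non-neighbour of $x$ can project onto $u_x^{\perp}$ --- such a vertex meets each matched pair in at most one vertex, in exactly $\mu=n(n+1)$ of the $\tfrac{k}{2}$ pairs.

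The real difficulty is that the standard feasibility tests are vacuous here: the integrality of the multiplicities, the Krein conditions and the absolute bound hold for every $n\ge1$, the degree-$\le2$ linear-programming relations for $\{u_i\}$ reduce to polynomial identities in $n$, and even the Schur-product (``higher Krein'') inequalities for this embedding are automatic. The new information must therefore come from a finer positive-definiteness or quadrature estimate applied to an \emph{auxiliary} configuration built from $\Gamma$ and the rigidity above --- natural candidates are the triangle-sums $u_x+u_y+u_z$ through a fixed vertex, or the projections of the non-neighbours of $x$ onto the simplex space $\langle p_1,\dots,p_{k/2}\rangle$. For a suitable such configuration, a sharp Delsarte--Goethals--Seidel-type bound --- here spherical designs and Gegenbauer polynomials, i.e.\ quadrature, enter --- should be attained by $\Gamma$, giving a genuine inequality which, after imposing integrality of the relevant parameters, collapses to a Diophantine equation in $n$ (possibly of exponential type). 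I expect identifying the right auxiliary configuration and test polynomial, so that $\Gamma$ sits exactly at the boundary of the bound, to be the main obstacle.

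Finally I would solve this Diophantine equation; it should have only finitely many, small, solutions. Checking the survivors leaves $n\in\{1,2,4\}$: for $n=1$ the parameters are $(9,4,1,2)$ with $\Gamma=L_{3,3}$, and for $n=2$ one cites the known uniqueness of the Brouwer--Haemers graph \cite{BrHa}; the substance of the theorem is the exclusion of $n=3$ and of all $n\ge5$.
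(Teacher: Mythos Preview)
Your setup is exactly right: the Euclidean representation in $\mathbb{R}^{g}$ with $g=k$, the perfect matching on $N(x)$, and the rigidity of the neighbourhood configuration are all used in the paper. But the decisive step is not a Delsarte--Goethals--Seidel or quadrature bound, and your proposal is honest that you have not identified one. In fact the standard analytic inequalities do \emph{not} yield the Diophantine constraint; the paper obtains it by a different mechanism that your outline misses entirely.

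The missing idea is this. Fix a vertex $v_\infty$ and consider the involution $\sigma$ on $\{v_\infty\}\cup N(v_\infty)$ that swaps each matched pair. The Euclidean representation is used, not to produce a numerical bound on the whole configuration, but to prove that $\sigma$ extends \emph{uniquely} to an automorphism $\sigma_{v_\infty}$ of $\Gamma$. Concretely, for $u\in N'(v_\infty)$ one writes $A(u)=N(u)\cap N(v_\infty)$ and $B(u)=\sigma(A(u))$, expresses $x_u$ in the basis $\{x_i:i\in N(v_\infty)\}$ of $\mathbb{R}^{g}$, and shows via a counting/linear-independence argument (the key inequality is $m_{n+1}\le n^2(n+3)-2n(n+1)$, proved by exhibiting enough independent vectors) that for each $u$ there is a unique $w$ with $A(w)=B(u)$; setting $\sigma(u)=w$ gives the automorphism. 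So the Euclidean geometry is used \emph{locally}, to force the existence of these involutions, rather than globally as an LP bound.

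Once one has an involution $\sigma_v$ for every $v$, the argument becomes purely algebraic: one checks $\sigma_u(v)=\sigma_v(u)$, $(\sigma_u\sigma_v)^3=e$, and, crucially, $(\sigma_v\sigma_w\sigma_u)^2=e$ for all $u,v,w$. These relations let one put an $F_3$-vector-space structure on $V$ (with $u+v:=\sigma_{v_0}(\sigma_u(v))$ for a fixed origin $v_0$), so that $|V|=3^{m}$. The Diophantine equation is therefore $(n^{2}+3n-1)^{2}=3^{2m}$, i.e.\ $n^{2}+3n-1=3^{m}$, equivalently $(2n+3)^{2}-13=4\cdot 3^{m}$, which by Bremner--Calderbank--Hanlon--Morton--Wolfskill has only the solutions $m\in\{1,2,3\}$, giving $n\in\{1,2,4\}$. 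Your proposed route via auxiliary configurations and Gegenbauer test functions does not seem to reach any such arithmetic constraint; the algebraic $F_3$-structure is the point you should aim for.
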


The proof consists of two parts. First, in Section~3 we will show
that each graph in the family exhibits certain symmetries (in
particular, its group of automorphisms is vertex-transitive). Then,
in Section~4 we will use different properties of these symmetries to
prove that the set of vertices can be given a vector space structure
over the finite field $F_3$. In particular, the number of vertices
in the graph is a power of $3$.

Finally, the resulting diophantine equation has the only three
mentioned solutions by virtue of~\cite[Theorem B]{BCHMW}. This
equation has appeared during the studying of ternary linear codes
with exactly two nonzero weights and with the minimal weight of the
dual code at least 4. It was actually shown in~\cite{CK} that each
of such codes of dimension $2m$ implies a strongly regular graph
from the family with $v=3^{2m}$ vertices.

The following result completes description of the family.
\begin{theorem}
The strongly regular graph with parameters $(729,112,1,20)$ is
unique up to isomorphism.
\end{theorem}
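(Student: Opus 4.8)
The plan is to deduce Theorem~2 from the structural results underlying Theorem~1 together with the known classification of large caps in $\mathrm{PG}(5,3)$. Let $\Gamma$ be a strongly regular graph with parameters $(729,112,1,20)$; then $g=k=112$ and the eigenvalues of $\Gamma$ are $112$, $4$ and $-23$. By the construction carried out in Sections~3 and~4, the vertex set of $\Gamma$ carries an $F_3$-vector-space structure for which all translations are automorphisms, so $\Gamma$ is isomorphic to a Cayley graph $\mathrm{Cay}(F_3^6,S)$ with connection set $S=\{x:x\sim 0\}$ satisfying $-S=S$, $0\notin S$ and $|S|=112$. Since $\mathrm{char}\,F_3=3$ we have $s\neq -s$ for $s\neq 0$, so $S$ splits into $56$ pairs $\{s,-s\}$, which are $56$ distinct points $\mathcal P$ of $\mathrm{PG}(5,3)$; thus $S$ is exactly the cone over $\mathcal P$ with the origin deleted.

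I would then translate the two hypotheses $\mu=20$ and $\lambda=1$ into geometric properties of $\mathcal P$. First, a routine character-sum evaluation expresses the eigenvalue of $\mathrm{Cay}(F_3^6,S)$ at a nonprincipal character $\chi_a$ as $3\,|\mathcal P\cap a^{\perp}|-56$; since this must lie in $\{4,-23\}$, every hyperplane of $\mathrm{PG}(5,3)$ meets $\mathcal P$ in $11$ or $20$ points, i.e. $\mathcal P$ is a projective two-weight set (the associated $[56,6]_3$ code has nonzero weights $36$ and $45$, as in~\cite{CK}). Second, for adjacent vertices $0$ and $v$ (with $v\in S$) the vertex $-v=2v$ is always a common neighbour, because $-v\in S$ and $-v-v=v\in S$, while $-v\notin\{0,v\}$; as $\lambda=1$, it is the only common neighbour of $0$ and $v$. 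If three points $\langle p_1\rangle,\langle p_2\rangle,\langle p_3\rangle\in\mathcal P$ were collinear, we could scale the representatives so that $p_1+p_2+p_3=0$, and then $-p_2\in S$ would be a common neighbour of $0$ and $p_1$ (since $-p_2-p_1=p_3\in S$) different from $-p_1$, a second one, contradicting $\lambda=1$. Hence $\mathcal P$ is a $56$-cap in $\mathrm{PG}(5,3)$ (equivalently, the dual of the associated code has minimum distance at least $4$, which is precisely the hypothesis present in~\cite{CK} and~\cite{BCHMW}).

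To finish, I would invoke Hill's classification: the maximal size of a cap in $\mathrm{PG}(5,3)$ is $56$, and the $56$-cap is unique up to the action of $\mathrm{PGL}(6,3)$ (the Hill cap). Projective equivalence of $\mathcal P$ lifts to a linear equivalence of the corresponding connection sets (over $F_3$ there is no Frobenius twist to worry about), and a linear automorphism of $F_3^6$ carrying one admissible $S$ onto another induces an isomorphism of the two Cayley graphs; therefore $\Gamma\cong\mathrm{Cay}(F_3^6,S)$ is determined up to isomorphism, and it coincides with the Games graph. The step I expect to be load-bearing is not this final reduction, which rests on the already established uniqueness of the $56$-cap, but the input from Sections~3--4 that realises $\Gamma$ as a Cayley graph over $F_3$ in the first place; granting that, the only genuinely new ingredient is the short argument that $\lambda=1$ forces $\mathcal P$ to be a cap.
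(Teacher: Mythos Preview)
Your proposal is correct and follows essentially the same route as the paper: use Proposition~2 to realise $\Gamma$ as a Cayley graph on $F_3^6$, project the connection set to a $56$-point set in $\mathrm{PG}(5,3)$, use $\lambda=1$ to show it is a cap, and invoke Hill's uniqueness of the $56$-cap. Your character-sum digression establishing the two-intersection property is correct but not needed for the proof, and the paper omits it; conversely, the paper is terser than you about lifting projective equivalence of caps to a graph isomorphism, simply citing the explicit construction in~\cite{BrvL}.
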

We will prove Theorems 1 and 2 in Section 5.

In the next section we will explain the Euclidean representation of
strongly regular graphs, which is the main tool to prove that each
graph in the family has a vertex transitive automorphism group.
\section{Euclidean representation}
Let $\Gamma=(V,E)$ be a strongly regular graph with negative
eigenspace of dimension $g$.
 Then there exists a set of vectors $\{x_i:i\in V\}\subset \mathbb{R}^g$ satisfying the following two conditions. First,
$$
\langle x_i,x_j\rangle=
\begin{cases}1, & \mbox{if }i=j, \\
p, & \mbox{if }i\mbox{ and }j\mbox{ are adjacent},\\
q, & \mbox{else},
\end{cases}
$$
where $p,q\in (-1,1)$. Second condition is that the set $\{x_i:i\in
V\}$ forms a spherical 2-design, that is
$$\sum_{i\in V}x_i=0,$$
and
$$
\sum_{i,j\in V}\langle x_i,x_j\rangle^2 =\frac{|V|^2}{g}.
$$
The values of $p$ and $q$ are uniquely determined by these
conditions. For more information on relations between the Euclidean
representation of strongly regular graphs and spherical designs
see~\cite{Cam}. To construct such vectors consider columns $\{y_i:
i\in V\}$ of the matrix $A-fI$ and put $x_i:=z_i/\|z_i\|$, where
$$
z_i=y_i-\frac 1{|V|}\sum_{j\in V}y_j, \quad i\in V.
$$
It is easy to check that these vectors satisfy the above mentioned
conditions. The main tool we will use for description of strongly
regular graphs is the fact that each subset $\{x_i: i\in U\}$, where
$U\subset V$, has a positive definite Gram matrix
$\left\{(x_i,x_j)\right\}_{i,j\in U}$ of rank at most $g$.
Similarly, we could get the Euclidean representation of $\Gamma$ in
$\mathbb{R}^f$. However, we will never use it in this paper. The
reason is that $f$ is much larger than $g$ for graphs from the
considered family. Thus, the Euclidian representation in
$\mathbb{R}^g$ contains much more information.
\section{Nontrivial automorphisms}
Fix $n\ge 2$. Let $\Gamma=(V,E)$ be a strongly regular graph with
parameters $((n^2+3n-1)^2,n^2(n+3),1,n(n+1))$. In this section, we
will also fix some vertex $v_{\infty}$ of $\Gamma$. For any vertex
$v$, let $N(v)$ be the set of all neighbors of $v$, and let $N'(v)$
be the set of non-neighbors of $v$, i.e. $N'(v)=V\setminus(\{v\}\cup
N(v))$. The subgraph induced on the set $N(v_{\infty})$ is
isomorphic to $m K_2$. Define the permutation $\sigma$ on the set
$\{v_{\infty}\}\cup N(v_{\infty})$ by switching all pairs of
adjacent vertices in $N(v_{\infty})$ and leaving $v_{\infty}$ fixed.
Our goal in this section is to prove the following
\begin{proposition}
The permutation $\sigma$ can be extended uniquely to an automorphism
of $\Gamma$.
\end{proposition}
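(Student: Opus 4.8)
The plan is to use the Euclidean representation from Section 2 as the main tool. Let $g = n^2(n+3) = k$ be the dimension, and let $\{x_i : i \in V\} \subset \mathbb{R}^g$ be the Euclidean representation, with inner products $1$ on the diagonal, $p$ between adjacent vertices, and $q$ between non-adjacent ones. The key observation is that $\sigma$ is already a graph automorphism of the induced subgraph on $\{v_\infty\} \cup N(v_\infty)$, so the corresponding vectors $\{x_i : i \in \{v_\infty\} \cup N(v_\infty)\}$ are simply permuted by $\sigma$; in particular there is a (unique) orthogonal transformation $T$ of $\mathbb{R}^g$ with $T x_i = x_{\sigma(i)}$ for all $i$ in this set, provided these $k+1$ vectors span $\mathbb{R}^g$. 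I would first check that they do span: the Gram matrix of $\{x_i : i \in N(v_\infty)\}$ together with $x_{v_\infty}$ has rank exactly $g$ because $\Gamma$ is connected and the representation is faithful (the only way the rank could drop is forced by the parameters, and here it does not). So $T$ exists and is unique. Uniqueness of the extension of $\sigma$, if it exists at all, is then immediate: any automorphism extending $\sigma$ must induce $T$ on the span of those vectors, hence on all of $\mathbb{R}^g$, hence is determined.

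The substance is existence. Define $\tau(i)$ for a general vertex $i \in V$ by: $\tau(i)$ is the unique vertex $j$ with $x_j = T x_i$. For this to make sense I must show $T x_i$ is always one of the representing vectors. The strategy is a counting/rigidity argument: for any vertex $w \notin \{v_\infty\} \cup N(v_\infty)$, look at how $w$ relates to $v_\infty$ and to the $k$ neighbors of $v_\infty$. Since $\mu = n(n+1)$, the vertex $w$ has exactly $\mu$ neighbors among $N(v_\infty)$. The crucial combinatorial fact to establish is that these $\mu$ neighbors form a union of complete pairs in $m K_2$ — i.e., $\sigma$ maps $N(w) \cap N(v_\infty)$ to itself — OR, more likely, that the intersection pattern is rigid enough that there is a unique vertex $w'$ whose neighborhood among $N(v_\infty)$ is exactly $\sigma(N(w) \cap N(v_\infty))$ and which agrees with $w$ on adjacency to $v_\infty$. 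This is where $\lambda = 1$ does the work: because any two adjacent vertices have a unique common neighbor, the local structure around each edge is extremely constrained, and two vertices with the same neighbors inside $N(v_\infty)$ and the same adjacency to $v_\infty$ should coincide. I expect to prove that the map $w \mapsto$ (adjacency to $v_\infty$, neighbor set in $N(v_\infty)$) is injective on $N'(v_\infty)$, using $\lambda = 1$ to rule out two vertices having identical such data.

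Once $\tau$ is a well-defined permutation of $V$ extending $\sigma$, I must check it preserves edges. Here I would again use the Euclidean representation: $i \sim j$ iff $\langle x_i, x_j \rangle = p$, and $\langle x_{\tau(i)}, x_{\tau(j)} \rangle = \langle T x_i, T x_j \rangle = \langle x_i, x_j \rangle$ since $T$ is orthogonal, so $\tau$ automatically preserves all inner products, hence adjacency and non-adjacency — \emph{provided} I have verified $x_{\tau(i)} = T x_i$ for every $i$, which is exactly the well-definedness established in the previous step. So the whole proposition reduces to the single hard point: showing $T x_w$ is a representing vector for every $w \in N'(v_\infty)$, equivalently the injectivity-plus-surjectivity of the "local data" map on $N'(v_\infty)$. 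The main obstacle is therefore the combinatorial rigidity argument for vertices at distance $2$ from $v_\infty$; I anticipate it will require a careful double count of edges between $N(w)$, $N(v_\infty)$, and their complements, exploiting $\lambda = 1$ to pin down how the $m K_2$ structure on $N(v_\infty)$ interacts with $N(w)$, and possibly a positive-semidefiniteness argument on a suitable $3 \times 3$ or $4 \times 4$ Gram submatrix to exclude the bad configurations.
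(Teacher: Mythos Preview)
Your plan is correct and matches the paper's approach: the paper also uses the Euclidean representation in $\mathbb{R}^g$, shows $\{x_i : i \in N(v_\infty)\}$ is a basis, and reduces the problem to proving that for each $w \in N'(v_\infty)$ there is a (necessarily unique) $w'$ with $N(w') \cap N(v_\infty) = \sigma(N(w) \cap N(v_\infty))$. Your anticipation of the hard step is accurate --- the paper carries it out via a double-counting system for the numbers $m_i = \#\{w' : |A(w) \cap A(w')| = i\}$ combined with a linear-independence argument that ends in a $3\times 3$ quadratic form with negative determinant --- but be aware this step is the bulk of the work and also needs a small auxiliary inequality (Lemma~1) to convert the resulting bound into the exact conclusion $m_0=1$.
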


To prove Proposition 1 we need the next simple lemma.
\begin{lemma}
\label{lem1} Suppose that $m_0,m_1,\ldots,m_{n-2}$ are nonnegative
integers such that
\begin{equation}
\label{l1} \sum_{i=0}^{n-2}\binom{n-i+1}{2}m_i\leq \binom{n+1}{2}.
\end{equation}
Then the following inequality
$$
\sum_{i=0}^{n-2}\binom{n-i}{2}m_i\leq \binom{n}{2},$$ holds. The
equality attains if and only if $m_0=1$, and
$m_1=m_2=\ldots=m_{n-2}=0$.
\end{lemma}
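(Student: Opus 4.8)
The plan is to reduce everything to a single elementary comparison between the coefficients appearing on the two sides, namely that the ratio $\binom{n-i}{2}/\binom{n-i+1}{2}$ is maximized at $i=0$. One computes directly
$$
\frac{\binom{n-i}{2}}{\binom{n-i+1}{2}}=\frac{n-i-1}{n-i+1},
$$
and since $t\mapsto \frac{t-1}{t+1}=1-\frac{2}{t+1}$ is strictly increasing, for every $i\in\{0,1,\dots,n-2\}$ (so that $2\le n-i\le n$) we get $\frac{n-i-1}{n-i+1}\le\frac{n-1}{n+1}$, with equality precisely when $i=0$. Equivalently, $(n+1)\binom{n-i}{2}\le(n-1)\binom{n-i+1}{2}$ for all such $i$.

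Granting this, the first step is the chain of inequalities
$$
\sum_{i=0}^{n-2}\binom{n-i}{2}m_i\;\le\;\frac{n-1}{n+1}\sum_{i=0}^{n-2}\binom{n-i+1}{2}m_i\;\le\;\frac{n-1}{n+1}\binom{n+1}{2}=\binom{n}{2},
$$
where the first inequality uses the coefficientwise bound above together with $m_i\ge0$, the second uses the hypothesis \eqref{l1}, and the last is the identity $\frac{n-1}{n+1}\cdot\frac{(n+1)n}{2}=\frac{n(n-1)}{2}$. This already yields the asserted inequality.

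For the equality discussion, suppose $\sum_{i=0}^{n-2}\binom{n-i}{2}m_i=\binom{n}{2}$. Then both inequalities in the chain are equalities. Equality in the first forces $m_i=0$ whenever $\frac{n-i-1}{n-i+1}<\frac{n-1}{n+1}$, i.e. whenever $i\ge1$, so only $m_0$ may be nonzero; equality in the second forces $\sum_i\binom{n-i+1}{2}m_i=\binom{n+1}{2}$, which with $m_i=0$ for $i\ge1$ reads $\binom{n+1}{2}m_0=\binom{n+1}{2}$, hence $m_0=1$. Conversely, $m_0=1$, $m_1=\cdots=m_{n-2}=0$ plainly satisfies \eqref{l1} with equality and makes the conclusion an equality.

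I do not expect a genuine obstacle here; the only point requiring a little care is the range of the index, ensuring $n-i\ge2$ throughout so that all the binomial coefficients are strictly positive and the monotonicity step carries the stated strictness for $i\ge1$ (which is exactly what gives the clean equality characterization). The boundary case $n=2$, where the sum reduces to its single $i=0$ term, is subsumed by the same computation.
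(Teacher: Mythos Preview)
Your proof is correct and follows essentially the same approach as the paper: the paper multiplies the hypothesis by $\binom{n}{2}/\binom{n+1}{2}=\tfrac{n-1}{n+1}$ and uses the coefficientwise inequality $\binom{k}{2}<\binom{k+1}{2}\cdot\tfrac{n-1}{n+1}$ for $k<n$, which is exactly your observation that $\tfrac{n-i-1}{n-i+1}\le\tfrac{n-1}{n+1}$ with equality only at $i=0$. Your write-up is more detailed, in particular spelling out the equality characterization that the paper leaves implicit.
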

\begin{proof} Since, for each $k<n$,
$$
\binom{k}{2}< \binom{k+1}{2}\frac{\binom{n}{2}}{\binom{n+1}{2}},
$$
we obtain the statement of lemma by multiplying~\eqref{l1} by
$\binom{n}{2}/\binom{n+1}{2}$.
\end{proof}

\begin{proof}[Proof of Proposition 1] For the Euclidean representation of the graph $\Gamma$ in
$\mathbb{R}^g$ we have $p=-\frac{n^2+2n-1}{n^2(n+3)}$, and
$q=\frac{1}{n(n+3)}$. Now computing the determinant of the Gram
matrix we get that vectors $\{x_i:i\in N(v_{\infty})\}$ are linearly
independent. Since $|N(v_{\infty})|=n^2(n+3)=g$, this then implies
that the set $\{x_i:i\in N(v_{\infty})\}$ forms a basis in
$\mathbb{R}^g$.

For any vertex $u\in N'(v_{\infty})$, we have $|N(u)\cap
N(v_{\infty})|=\mu=n(n+1)$. Denote by $A(u)$ the set $N(u)\cap
N(v_{\infty})$,
 and by $B(u)$ the set of neighbors of $A(u)$ that are in $N(v_{\infty}$)
 (so that $|B(u)|=|A(u)|=n(n+1)$). Then, for $\alpha=\frac{n}{n^2-1}$, $\beta=\frac{1}{n^2-1}$ and $\gamma=\frac{n}{n-1}$, we have
\begin{equation}
x=x_u+\alpha\sum_{i\in A(u)}x_i+\beta\sum_{i\in B(u)}x_i+\gamma
x_{v_{\infty}}=0. \label{eq:n1}
\end{equation}
Indeed, it is easy to check that $\langle x,x_i\rangle=0$ for $i\in
N(v_{\infty})$, and since the set $\{x_i:i\in N(v_{\infty})\}$ forms
a basis then $x=0$. Applying equation (\ref{eq:n1}) for each pair
$u,w\in N'(v_{\infty})$ we obtain
\begin{equation}
\begin{cases}
\langle x_u,x_w\rangle = p ~~\Leftrightarrow~~ n|A(u)\cap A(w)|+|A(u)\cap B(w)|=n+1,\\
\langle x_u,x_w\rangle = q ~~\Leftrightarrow~~ n|A(u)\cap A(w)|+|A(u)\cap B(w)|=n(n+1).\\
\end{cases}
\label{eq:n2}
\end{equation}

Observe that for a pair of nonnegative integers $(k,l)$ the equation
$nk+l=n+1$ has only two solutions: $(1,1)$ and $(0,n+1)$. The
equation $nk+l=n(n+1)$ has $n+1$ solutions:
$(n+1,0),(n,n),(n-1,2n),\ldots,(0,n(n+1))$. We see that $|A(u)\cap
A(w)|\leq n+1$, so in any case $A(u)\neq A(w)$. Thus, if we extend
$\sigma$ to an automorphism of $\Gamma$, then $A(\sigma(u))=B(u)$,
and therefore $\sigma(u)$ is defined uniquely.

Now, we fix $u\in N'(v_{\infty})$. Clearly, there are exactly
$n(n+1)$ vertices in $N'(v_{\infty})\cap N(u)$ with $|A(u)\cap
A(w)|=1$, and exactly $n^2(n+3)-2n(n+1)$ such vertices with
$|A(u)\cap A(w)|=0$. Let $m_i$, $i=0,1,\ldots,n+1,$ be the number
 of vertices $w\in N'(v_\infty)\cap N'(u)$ with $|A(u)\cap A(w)|=i$. The numbers $m_i$ must satisfy three equations
\begin{equation}
\label{eq5}
\begin{cases}
\sum_{i=0}^{n+1}m_i=|N'(v_{\infty})\cap N'(u)|=n^4+4n^3+2n^2-5n-1,\\
\sum_{i=0}^{n+1}im_i=|A(u)|(n^2(n+3)-4)=n(n^2-1)(n+2)^2,\\
\sum_{i=0}^{n+1}\binom{i}{2}m_i=n(n^2-1)(n+2)(n^2+n-1)/2.
\end{cases}
\end{equation}

Here, the second equation is obtained by counting edges between
$A(u)$ and $N'(v_{\infty})\cap N'(u)$. We get the third equation by
counting triples $(v_1,v_2,v_3)$, where $v_1$ and $v_2$ are
different vertices in $A(u)$, and $v_3\in N'(v_{\infty})\cap N'(u)$
is adjacent to both $v_1$ and $v_2$.

Now, solving~\eqref{eq5} in $m_{n-1},m_n$ and $m_{n+1}$ we obtain
\begin{equation}
\begin{cases}
m_{n-1}=\binom{n+1}{2}-\sum_{i=0}^{n-2}\binom{n-i+1}{2}m_i,\\
m_{n+1}=n^2(n+3)-2n(n+1)+\binom{n}{2}-\sum_{i=0}^{n-2}\binom{n-i}{2}m_i.\\
\end{cases}
\label{eq:n4}
\end{equation}

The crucial part of the proof is the following inequality
\begin{equation}
\label{main} m_{n+1}\leq n^2(n+3)-2n(n+1)
\end{equation}
to be proven later. Assuming~\eqref{main} it is easy to conclude the
proof of Proposition 1. Indeed, combining~\eqref{eq:n4}
and~\eqref{main} with the statement of Lemma~\ref{lem1} we
immediately obtain that $m_0=1$, and $m_1=m_2=\ldots=m_{n-1}=0$.
Therefore, for each $u\in N'(v_{\infty})$ there exists a unique
vertex $w_u\in N'(v_{\infty})$ such that $A(u)=B(w_u)$. Define
$\sigma$ by $\sigma(u)=w_u$, $u\in N'(v_{\infty})$. Then $\sigma$ is
a bijection, because $\sigma(\sigma(u))=u$. Finally, since for each
pair $u$, $w\in N'(v_{\infty})$
$$|A(u)\cap A(w)|=|B(u)\cap B(w)|=|A(\sigma(u))\cap A(\sigma(w))|,$$
and
$$|A(u)\cap B(w)|=|B(u)\cap A(w)|=|A(\sigma(u))\cap B(\sigma(w))|,$$
we see that $\sigma$ is an automorphism by virtue of (\ref{eq:n2}),
proving Proposition 1.

Now, we are ready to prove~\eqref{main}.

Denote by $C(u)$ the set $\{w\in N'(v_{\infty})\cap N(u):|A(u)\cap
A(w)|=1\}$, and by $D(u)$ the set
 $\{w\in N'(v_{\infty})\cap N'(u) : |A(u)\cap A(w)|=n+1\}$. We have $|C(u)|=|B(u)|=n(n+1)$, and also $|D(u)|=m_{n+1}$.
Thus, to prove~\eqref{main} it is sufficient to show that vectors
$\{x_i:i\in B(u)\cup C(u)\cup D(u)\}$ are linearly independent
(recall that $g=n^2(n+3)$).

First, note that there are no edges between $B(u)\cup C(u)$ and
$D(u)$, between $v_{\infty}$ and $C(u)$, and between $u$ and $B(u)$.
Then, $v_{\infty}$ is connected to each vertex in $B(u)$, and $u$ is
connected to each vertex in $C(u)$. Also, in the subgraph induced on
$B(u)\cup C(u)$ every vertex has a degree $1$. Suppose that there is
a nontrivial linear relation
\begin{equation}
\sum_{i\in B(u)}\beta_ix_i+\sum_{i\in C(u)}\gamma_ix_i+\sum_{i\in
D(u)}\delta_ix_i=0. \label{eq:n5}
\end{equation}
For $i\in B(u)$, denote by $\phi(i)\in V$ its unique neighbor in
$C(u)$. Put $S_B=\sum_{i\in B(u)}\beta_i$, $S_C=\sum_{i\in
C(u)}\gamma_i$, and $S_D=\sum_{i\in D(u)}\delta_i$. Now, taking
inner products of both sides in~(\ref{eq:n5}) with $x_{v_{\infty}}$
and with $x_u$, we get
\begin{equation}
\begin{cases}
pS_B+qS_C+qS_D=0,\\
qS_B+pS_C+qS_D=0.
\end{cases}
\label{eq:n6}
\end{equation}

Similarly, taking inner products of both sides in~(\ref{eq:n5}) with
$x_i$ and with $x_{\phi(i)}$ for each $i\in B(u)$, we obtain
\begin{equation}
\begin{cases}
(1-q)\beta_i+(p-q)\gamma_{\phi(i)}+qS_B+qS_C+qS_D=0,\\
(p-q)\beta_i+(1-q)\gamma_{\phi(i)}+qS_B+qS_C+qS_D=0.
\end{cases}
\label{eq:n7}
\end{equation}
Subtracting from the first equation in (\ref{eq:n6}) the second we
get that $S_B=S_C$. Similarly,~\eqref{eq:n7} yields
$\beta_i=\gamma_{\phi(i)}$, $i\in B(u)$. Now, summing up the first
equation of (\ref{eq:n7}) over all $i\in B(u)$ we obtain
\begin{equation*}
(1+p+2(n^2+n-1)q)S_B+n(n+1)qS_D=0.
\end{equation*}
Combining this equation with (\ref{eq:n6}) we get $S_B=S_C=S_D=0$.
Thus,~\eqref{eq:n7} together with inequality $1+p-2q>0$, $n\ge 2,$
implies that $\beta_i=\gamma_{\phi(i)}=0$ for all $i\in B(u)$.
Therefore, we are left with the relation
$$
\sum_{i\in D(u)}\delta_ix_i=0,
$$
where $\sum_{i\in D(u)}\delta_i=0$. Let
$\delta_i^{+}=\max(\delta_i,0)$, and
$\delta_i^{-}=-\min(\delta_i,0)$. Then we have
$$
\sum_{i\in D(u)}\delta_i^{+}x_i=\sum_{i\in D(u)}\delta_i^{-}x_i.
$$
Normalizing $\delta_i$ in such a way that $\sum_{i\in
D(u)}\delta_i^{+}=\sum_{i\in D(u)}\delta_i^{-}=1$ we obtain the
following bound
\begin{align}
\label{eq12} \|\sum_{i\in D(u)}\delta_i^{+}x_i\|^2&=\langle
\sum_{i\in D(u)}\delta_i^{+}x_i,\sum_{i\in
D(u)}\delta_i^{-}x_i\rangle
=\sum_{i,j\in D(u)}\delta_i^{+}\delta_j^{-}\langle x_i,x_j\rangle\\
\notag &=\sum_{i,j\in D(u), i\neq j}\delta_i^{+}\delta_j^{-}\langle
x_i,x_j\rangle \leq q\sum_{i,j\in D(u), i \neq
j}\delta_i^{+}\delta_j^{-}=q,
\end{align}
where we used the fact that $\delta_i^{+}\delta_i^{-}=0$ for $i\in
D(u)$, and $\langle x_i,x_j\rangle\leq q$ for $i\neq j$. We will now
use~\eqref{eq12} to show that the quadratic form of three variables
$$
Q(a,b,c)=\|a(x_u+x_{v_{\infty}})+b\sum_{i\in B(u)\cup
C(u)}x_i+c\sum_{i\in D(u)}\delta_i^{+}x_i\|^2
$$
attains a negative value for some $a,b,c\in{\mathbb R}$, thus
contradicting our assumption (\ref{eq:n5}). Let $(a_{ij})$ be the
symmetric $3\times 3$ matrix associated to the quadratic form $Q$.
Its entries are:
\begin{align*}
a_{11}&=\|x_u+x_{v_{\infty}}\|^2=2+2q,\\
a_{12}&=(|B(u)|p+|C(u)|q)+(|B(u)|q+|C(u)|p)=2n(n+1)(p+q),\\
a_{13}&=2q\sum_{i\in D(u)}\delta_i^+=2q,\\
a_{22}&=|B(u)\cup C(u)|(1+p+2(n^2+n-1)q)=2n(n+1)(1+p+2(n^2+n-1)q),\\
a_{23}&=|B(u)\cup C(u)|\sum_{i\in D(u)}\delta_i^+q=2n(n+1)q,\\
a_{33}&=\|\sum_{i\in D(u)}\delta_i^{+}x_i\|^2\leq q.
\end{align*}
We may assume that $a_{33}=q$, since $Q(a,b,c)$ can only increase.
The determinant of the resulting matrix is
$$
\det(a_{ij})=-\frac{8(n+1)(n^4+6n^3+7n^2-6n+1)}{n^3(n+3)^3},
$$
which is negative for $n\geq 1$. Thus, $Q$ attains a negative value.
Therefore vectors $\{x_i:i\in B(u)\cup C(u)\cup D(u)\}$ are linearly
independent, proving~\eqref{main} and hence Proposition 1.
\end{proof}
For each $v\in V$, denote by $\sigma_{v}$ the automorphism
constructed for $v_{\infty}=v$. It follows from the definition that
each $\sigma_v$ is an involution. Moreover, for each three vertices
$u$, $v$, and $w$ forming a triangle, $\sigma_u(v)=w$.  Since
$\Gamma$ is a connected graph this immediately implies that
$Aut(\Gamma)$ is vertex transitive.

\section{Structure of automorphisms}

We now proceed to study the global structure of strongly regular
graphs with parameters $$((n^2+3n-1)^2,n^2(n+3),1,n(n+1))$$ by using
the automorphisms we have constructed in Proposition 1.  In this
section, we will prove the following
\begin{proposition}
The set of vertices $V$ can be given a vector space structure over
$F_3$ such that $\sigma_v(u)=-(u+v)$ for all $u,v\in V$, in
particular, $|V|=3^m$ for some $m\in \mathbb{N}$.
\end{proposition}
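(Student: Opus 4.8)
The plan is to promote the involutions $\sigma_v$ of Proposition 1 to an $F_3$-vector-space structure on $V$. The central tool is the \emph{conjugation identity}: for every $g\in Aut(\Gamma)$ we have $g\sigma_v g^{-1}=\sigma_{g(v)}$, because $g\sigma_v g^{-1}$ is an automorphism of $\Gamma$ fixing $g(v)$ and switching adjacent pairs in $N(g(v))$, and by the uniqueness clause of Proposition 1 there is only one such automorphism. In particular $\sigma_u\sigma_v\sigma_u=\sigma_{\sigma_u(v)}$ for all $u,v\in V$. I also record two elementary observations: (i) if $\{a,b\}$ is an edge with triangle $\{a,b,c\}$, then $\sigma_a(b)=\sigma_b(a)=c$, because the $mK_2$-partner of $b$ in $N(a)$ is precisely the unique common neighbour of $a$ and $b$; and (ii) each $\sigma_v$ fixes only $v$ (from the description of $\sigma_v$ on $N'(v)$ in the proof of Proposition 1, since, by $\lambda=1$, $N(u)\cap N(v)$ contains no edge, so $w_u\ne u$).

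Next I would extract the braid relations. For an edge $\{a,b\}$ with triangle $\{a,b,c\}$, observation (i) and the conjugation identity give $\sigma_a\sigma_b\sigma_a=\sigma_{\sigma_a(b)}=\sigma_c=\sigma_{\sigma_b(a)}=\sigma_b\sigma_a\sigma_b$, and together with $\sigma_a^2=\sigma_b^2=1$ this forces $(\sigma_a\sigma_b)^3=1$. Now fix $v_\infty=0$ and put $G=\langle\sigma_v:v\in V\rangle$ and $K=\langle\sigma_a\sigma_b:a,b\in V\rangle$. The conjugation identity shows $K\trianglelefteq G$, and $G/K$ is generated by the common image of the $\sigma_v$, an element of order at most $2$, so $[G:K]\le 2$. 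Since $G$ is vertex-transitive (end of Section 3), so is $K$: if $K=G$ this is clear, and if $[G:K]=2$ the $K$-orbits form a system of at most two blocks which cannot be interchanged, as $\sigma_0$ fixes $0$. Moreover $K$ is generated by the edge products $\sigma_a\sigma_b$ with $\{a,b\}\in E$, because every $\sigma_a\sigma_c$ with $a\ne c$ is either an edge product or, since $\mu>0$, equals $(\sigma_a\sigma_b)(\sigma_b\sigma_c)$ for a common neighbour $b$. Hence $K$ is transitive and generated by elements of order $3$.

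The crux --- and the step I expect to be the main obstacle --- is the \textbf{commutativity lemma}: $\sigma_u(v)=\sigma_v(u)$ for \emph{all} $u,v\in V$. This is equivalent to $K$ being abelian, and also to $(\sigma_u\sigma_v)^3=1$ for every pair (so that the $\sigma_v$ form a conjugacy class of $3$-transpositions in $G$). Indeed, writing $x\ast y:=\sigma_x(y)$, the conjugation identity says each $\sigma_u$ is an automorphism of $\ast$, so $\ast$ is left self-distributive, idempotent, and satisfies $x\ast(x\ast y)=y$; commutativity is exactly what turns this into an affine structure over $F_3$ with $x\ast y=-(x+y)$. For edges and for $u=v$ commutativity is observation (i); the difficulty is non-adjacent pairs $u\not\sim v$, where there is no purely formal route and the graph combinatorics must enter. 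I would attack it either by a counting argument continuing the method of Proposition 1 --- with $v$ as base vertex, $\sigma_v(u)$ is the unique $w$ with $N(w)\cap N(v)$ equal to the set of $mK_2$-partners (in $N(v)$) of $N(u)\cap N(v)$, and one must derive the corresponding description of $w$ with $u$ as base vertex by an incidence count inside $N(u)\cup N(v)\cup N(w)$ --- or by first proving the local statement that any two triangles through a fixed vertex generate a subconfiguration isomorphic to $AG(2,3)$, after which the global structure follows by an exchange argument. In either route I expect the forced linear dependences of the Euclidean representation, such as equation (\ref{eq:n1}), to do the essential work.

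Granting the commutativity lemma, the proposition follows with little effort. Then $K$ is abelian and generated by elements of order $3$, hence of exponent $3$, hence a finite elementary abelian $3$-group, so $|K|=3^m$; a transitive abelian group is regular, so $|V|=|K|=3^m$. For $v\in V$ let $\kappa_v\in K$ be the unique element with $\kappa_v(0)=v$, and set $u+v:=\kappa_v(u)$; then $(V,+)$ is a group isomorphic to $K$ with identity $0$, and being abelian of exponent $3$ it is an $F_3$-vector space. Commutativity gives $\sigma_v(0)=\sigma_0(v)$, so $(\sigma_0\sigma_v)(0)=\sigma_0(\sigma_0(v))=v$, whence $\kappa_v=\sigma_0\sigma_v$ and $-v=\kappa_v^{-1}(0)=\sigma_v(0)=\sigma_0(v)$. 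Therefore, for all $u,v\in V$, $\sigma_v(u)=\sigma_0(\sigma_0\sigma_v(u))=\sigma_0(\kappa_v(u))=\sigma_0(u+v)=-(u+v)$, which is the assertion of the proposition.
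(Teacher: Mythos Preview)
Your strategy matches the paper's: use the $\sigma_v$ to impose an $F_3$-structure on $V$. Two points, one minor and one a genuine gap.

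First, the commutativity lemma $\sigma_u(v)=\sigma_v(u)$ for nonadjacent $u,v$ is far easier than you fear. The proof of Proposition~1 already shows $m_0=1$: for each nonadjacent pair $u,v$ there is a \emph{unique} vertex $w\in N'(u)\cap N'(v)$ with $N(u)\cap N(v)\cap N(w)=\emptyset$, and $\sigma_u(v)$ is such a $w$ (since $A(\sigma_u(v))=B(v)$ is disjoint from $A(v)$). This characterisation is manifestly symmetric in $u$ and $v$, so $\sigma_u(v)=\sigma_v(u)$ in one line. No further incidence count is needed.

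Second, and this is the real problem: your assertion that commutativity of $\ast$ is ``equivalent to $K$ being abelian'' and ``exactly what turns this into an affine structure over $F_3$'' is false. A commutative, idempotent, involutory, left self-distributive binary operation with each $\sigma_u$ having $u$ as its unique fixed point is precisely a \emph{Hall triple system}; these are the same thing as commutative Moufang loops of exponent $3$, and they need not be associative --- the smallest non-associative example has $81$ points, and there $K$ is non-abelian and does not act regularly. So ``granting the commutativity lemma'' you are not yet done. The paper closes this gap with a separate lemma: $(\sigma_v\sigma_w\sigma_u)^2=e$ for \emph{all} triples $u,v,w$ (equivalent to $K$ abelian, and to associativity of your $+$). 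Its proof is not formal from the quandle axioms: for $u\sim v$ one writes $g_1=\sigma_u\sigma_v$ and $g_3=\sigma_w\sigma_{u\circ(v\circ w)}$, observes via commutativity that each $g_i$ sends every $x$ into its own triangle, and then uses $\lambda=1$ once more to force $g_1=g_3$; the nonadjacent case is then bootstrapped through a common neighbour $t$. Only after this additional identity does your final paragraph go through.
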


First, we establish some properties of $\sigma_v$.
\begin{lemma}
Involutions $\{\sigma_u: u\in V\}$ satisfy the following:
\begin{itemize}
\item[(i)] $\sigma_u(v)=\sigma_v(u)$ for all $u,v\in V$;
\item[(ii)] if $g$ is any automorphism of $\Gamma$, then $g\sigma_u=\sigma_{g(u)}g$;
\item[(iii)] $\sigma_u\sigma_v\sigma_u=\sigma_{\sigma_u(v)}$ for all $u,v\in V$;
\item[(iv)]  the automorphism $\sigma_u\sigma_v$ has no fixed points, and $(\sigma_u\sigma_v)^3=e$ for all $u\neq v\in V$;
\item[(v)] if $u$ and $v$ are adjacent, then for all $x\in V$ the vertices $x$ and $\sigma_u\sigma_v(x)$ are also adjacent.
\end{itemize}
\end{lemma}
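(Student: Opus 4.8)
The plan is to establish the five properties in the order (ii), (iii), (i), (iv), (v). Properties (ii) and (iii) are purely formal consequences of the uniqueness in Proposition~1, property (i) is the only one that genuinely uses the Euclidean representation, and (iv) and (v) then follow easily from (i). For (ii): if $g$ is any automorphism, then $g\sigma_u g^{-1}$ is an automorphism fixing $g(u)$, and since $g$ preserves adjacency it carries each matched pair of the subgraph $mK_2$ on $N(g(u))=g(N(u))$ to itself with its two vertices interchanged; hence $g\sigma_u g^{-1}$ restricts on $\{g(u)\}\cup N(g(u))$ to the switching permutation whose \emph{unique} extension to an automorphism is $\sigma_{g(u)}$, so $g\sigma_u g^{-1}=\sigma_{g(u)}$. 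Taking $g=\sigma_u$ and using $\sigma_u^{-1}=\sigma_u$ gives (iii): $\sigma_u\sigma_v\sigma_u=\sigma_{\sigma_u(v)}$.

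For (i) the cases $u=v$ (trivial) and $u\sim v$ (the unique common neighbour $w$ makes $\{u,v,w\}$ a triangle, so $\sigma_u(v)=w=\sigma_v(u)$ by the observation after Proposition~1) are immediate; the remaining case $v\in N'(u)$ is the technical heart of the lemma and I expect it to be the main obstacle. Here I would first record that every automorphism $\phi$ of $\Gamma$ induces an orthogonal transformation $O_\phi$ of $\mathbb{R}^g$ with $O_\phi x_i=x_{\phi(i)}$ (the Gram matrix of the $x_i$ depends only on adjacency types and the $x_i$ span $\mathbb{R}^g$, so $O_\phi$ exists and is unique, with $O_{\phi\psi}=O_\phi O_\psi$). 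Put $A=N(u)\cap N(v)$. Since $v\in N'(u)$ and $u\in N'(v)$, equation~(\ref{eq:n1}) with base vertex $u$, and then with base vertex $v$, reads
$$x_v+\alpha\sum_{i\in A}x_i+\beta\sum_{i\in\sigma_u(A)}x_i+\gamma x_u=0,\qquad x_u+\alpha\sum_{i\in A}x_i+\beta\sum_{i\in\sigma_v(A)}x_i+\gamma x_v=0.$$
Applying $O_{\sigma_u}$ to the first (using $\sigma_u(u)=u$ and $\sigma_u(\sigma_u(A))=A$) gives $x_{\sigma_u(v)}=-\gamma x_u-\alpha\sum_{i\in\sigma_u(A)}x_i-\beta\sum_{i\in A}x_i$, and symmetrically $x_{\sigma_v(u)}=-\gamma x_v-\alpha\sum_{i\in\sigma_v(A)}x_i-\beta\sum_{i\in A}x_i$; subtracting the two displayed identities gives $\sum_{i\in\sigma_u(A)}x_i-\sum_{i\in\sigma_v(A)}x_i=\frac{\gamma-1}{\beta}(x_v-x_u)$. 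Combining, $x_{\sigma_u(v)}-x_{\sigma_v(u)}=\left(\gamma-\frac{\alpha(\gamma-1)}{\beta}\right)(x_v-x_u)$, and since $\gamma=\frac{n}{n-1}$, $\alpha=\frac{n}{n^2-1}$, $\beta=\frac{1}{n^2-1}$ satisfy $\gamma\beta=\alpha(\gamma-1)$, the coefficient vanishes; as distinct vertices have distinct representing vectors, $\sigma_u(v)=\sigma_v(u)$. The only hard point is recognizing that pushing~(\ref{eq:n1}) through the orthogonal maps attached to $\sigma_u$ and $\sigma_v$ reduces all of (i) to this single numerical identity.

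Granting (i), the rest is routine. For (iv) with $u\neq v$: by (iii), $\sigma_u\sigma_v\sigma_u=\sigma_{\sigma_u(v)}$ and $\sigma_v\sigma_u\sigma_v=\sigma_{\sigma_v(u)}$, which are equal by (i), so $\sigma_u\sigma_v\sigma_u=\sigma_v\sigma_u\sigma_v$; with $\sigma_u^2=\sigma_v^2=e$ this yields $(\sigma_u\sigma_v)^2=(\sigma_u\sigma_v)^{-1}$, hence $(\sigma_u\sigma_v)^3=e$. If $\sigma_u\sigma_v$ fixed a vertex $z$, then $\sigma_u(z)=\sigma_v(z)$, so by (i) $\sigma_z(u)=\sigma_z(v)$, forcing $u=v$ since $\sigma_z$ is a bijection---a contradiction; thus $\sigma_u\sigma_v$ is fixed-point-free. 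For (v) with $u\sim v$ and any $z\in V$: since $O_{\sigma_u}$ is an orthogonal involution, $\langle x_z,x_{\sigma_u\sigma_v(z)}\rangle=\langle x_{\sigma_u(z)},x_{\sigma_v(z)}\rangle$, which by (i) equals $\langle x_{\sigma_z(u)},x_{\sigma_z(v)}\rangle$; since $u\sim v$ and $\sigma_z$ is an automorphism, $\sigma_z(u)\sim\sigma_z(v)$, so this inner product equals $p$, and since $p\neq 1$ the vertices $z$ and $\sigma_u\sigma_v(z)$ are distinct and adjacent.
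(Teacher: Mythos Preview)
Your proof is correct, but it takes a different route from the paper in two places. First, you prove (ii) before (i), directly from the uniqueness clause of Proposition~1: the conjugate $g\sigma_u g^{-1}$ fixes $g(u)$ and swaps matched pairs in $N(g(u))$, so it must equal $\sigma_{g(u)}$. This is cleaner than the paper's argument, which first proves (i) and then observes that the triples $\{u,v,\sigma_u(v)\}$ are symmetric, hence preserved by any automorphism. Second, and more substantially, for the nonadjacent case of (i) you carry out a genuine Euclidean computation, pushing \eqref{eq:n1} through the orthogonal maps $O_{\sigma_u}$, $O_{\sigma_v}$ and verifying the numerical cancellation $\gamma\beta=\alpha(\gamma-1)$. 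The paper instead reads off from the proof of Proposition~1 (specifically from $m_0=1$) that $\sigma_u(v)$ is the \emph{unique} vertex $w$ nonadjacent to both $u$ and $v$ with $N(u)\cap N(v)\cap N(w)=\emptyset$; since this description is manifestly symmetric in $u$ and $v$, one gets $\sigma_u(v)=\sigma_v(u)$ in one line. So the case you flagged as ``the main obstacle'' admits a purely combinatorial shortcut. Your arguments for (iv) and (v) are essentially the same as the paper's, just rephrased via inner products rather than adjacency. In summary: your reordering (ii) before (i) buys an independent and arguably nicer proof of (ii), while your Euclidean attack on (i) is heavier than necessary but perfectly valid.
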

\begin{proof}

If $u$ and $v$ are adjacent, then they have both vertices
$\sigma_u(v)$ and $\sigma_v(u)$ as common neighbors. Thus the
condition $\lambda=1$ implies that $\sigma_u(v)=\sigma_v(u)$. If $u$
and $v$ are nonadjacent, then $\sigma_u(v)$ is a unique vertex $w$,
such that $w$ is nonadjacent to both $u$ and $v$, and $N(u)\cap
N(v)\cap N(w)=\emptyset$. Hence, in this case we also have
$\sigma_u(v)=\sigma_v(u)$. This proves (i).

From (i) we see that triples $\{u,v,w\}$ with $w=\sigma_u(v)$ are
defined symmetrically in $u,v,w$, that is the following identities
$$
w=\sigma_u(v)=\sigma_v(u),\quad v=\sigma_u(w)=\sigma_w(u),\quad
u=\sigma_w(v)=\sigma_v(w)
$$
hold. Therefore, $g$ must preserve the set of such triples. This
means that $\sigma_{g(u)}(g(v))=g(w)=g(\sigma_u(v))$, which proves
(ii).

Part (iii) follows from (ii) by letting $g=\sigma_v$.

From (iii) we have
$\sigma_u\sigma_v\sigma_u=\sigma_{\sigma_u(v)}=\sigma_{\sigma_v(u)}=\sigma_v\sigma_u\sigma_v$,
hence $(\sigma_u\sigma_v)^3=e$. If $\sigma_u(\sigma_v(x))=x$, then
$\sigma_u(x)=\sigma_v(x)$, and therefore $\sigma_x(u)=\sigma_x(v)$,
which contradicts $u\neq v$. This proves (iv).

Finally, since $\sigma_x$ is a graph automorphism for each $x\in V$,
then for an adjacent pair of vertices $u$ and $v$ the vertices
$\sigma_x(u)$ and $\sigma_x(v)$ are also adjacent. Similarly, the
vertices $\sigma_u(\sigma_x(u))=x$ and
$\sigma_u(\sigma_x(v))=\sigma_u\sigma_v(x)$ are adjacent as well.
This proves (v).
\end{proof}

In the sequel, we will use the notation $u\circ v$ for
$\sigma_u(v)$. It is convenient, since for any automorphism $g$ we
have $g(u\circ v)=g(u)\circ g(v)$. The crucial part in the proof of
Proposition 2 is the following lemma.
\begin{lemma}
For all $u$, $v$, $w\in V$, we have
$(\sigma_v\sigma_w\sigma_u)^2=e$.
\end{lemma}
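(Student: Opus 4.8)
The plan is to reduce this identity to the assertion that the subgroup $\mathcal T:=\langle\sigma_a\sigma_b:a,b\in V\rangle$ of $\mathrm{Aut}(\Gamma)$ is abelian, and then to prove the latter using the Euclidean representation. Write $T_{a,b}:=\sigma_a\sigma_b$. The equality $(\sigma_v\sigma_w\sigma_u)^2=e$ means $\sigma_v\sigma_w\sigma_u=(\sigma_v\sigma_w\sigma_u)^{-1}=\sigma_u\sigma_w\sigma_v$, i.e.\ (left-multiplying by $\sigma_u$) $\sigma_u\,T_{v,w}\,\sigma_u=T_{v,w}^{-1}$; so the Lemma says that every $\sigma_u$ inverts every $T_{v,w}$. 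Since $\sigma_u T_{u,w}\sigma_u=\sigma_w\sigma_u=T_{u,w}^{-1}$ and $\sigma_u T_{v,u}\sigma_u=\sigma_u\sigma_v=T_{v,u}^{-1}$, writing $T_{v,w}=T_{v,u}T_{u,w}$ and conjugating by $\sigma_u$ shows that the Lemma (for all $u,v,w$) is equivalent to: $T_{u,v}$ commutes with $T_{w,u}$ for all $u,v,w$. Fixing a base vertex $o$ and using $T_{w,o}=T_{o,w}^{-1}$, this is in turn equivalent to the generators $\{T_{o,x}:x\in V\}$ of $\mathcal T$ commuting pairwise, that is, to $\mathcal T$ being abelian; conversely, if $\mathcal T$ is abelian the displayed identities give the Lemma at once. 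Finally, since $\Gamma$ is connected, $T_{a,b}=T_{a,x_1}T_{x_1,x_2}\cdots T_{x_{\ell-1},b}$ along a path from $a$ to $b$ shows that $\mathcal T$ is in fact generated by the triangle rotations $T_{a,b}$ with $a\sim b$ --- by Lemma~2(iv),(v), $T_{a,b}$ (for $a\sim b$) is a fixed-point-free automorphism of order $3$, each of whose orbits is a triangle of $\Gamma$. It therefore suffices to prove that any two triangle rotations commute.

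This is where the Euclidean representation enters. Recall that any automorphism $g$ of $\Gamma$ acts on $\mathbb R^g$ by the orthogonal map $x_i\mapsto x_{g(i)}$, that this representation of $\mathrm{Aut}(\Gamma)$ is faithful, and that --- as established in Section~3 --- for any vertex $a$ the set $\{x_i:i\in N(a)\}$ is a basis of $\mathbb R^g$, so an automorphism is determined by its restriction to $\{a\}\cup N(a)$. To show that two triangle rotations $T,T'$ commute it suffices to show that $TT'$ and $T'T$ act identically on a spanning set of $\mathbb R^g$, and the necessary linear relations among the $x_i$ are produced exactly as in the proof of Proposition~1, using \eqref{eq:n1}--\eqref{eq:n2} and positive semidefiniteness/rank bounds for the relevant Gram matrices. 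A natural first case is that of triangles through a common vertex: since $\lambda=1$, two distinct triangles meet in at most one vertex, so if $T=\sigma_a\sigma_b$ and $T'=\sigma_a\sigma_c$ share the vertex $a$, then $b$ and $c$ lie in different $K_2$'s of the $mK_2$ on $N(a)$, whence $b\not\sim c$; and $[T,T']=e$ is then equivalent to the single instance $(\sigma_b\sigma_a\sigma_c)^2=e$ of the Lemma, which one proves by the Gram-determinant computation of the proof of \eqref{main}, identifying $\sigma_b\sigma_a\sigma_c$ with an involution of the form $\sigma_z$ via the rigidity just recalled.

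The step I expect to be the real obstacle is upgrading commutativity of triangle rotations through a common vertex to commutativity of all of them, since commutativity is not a transitive relation. Here one must invoke the Euclidean representation once more: for each triangle rotation $T$ one analyses its fixed subspace in $\mathbb R^g$ (spanned by the triangle-sums $x_i+x_{T(i)}+x_{T^2(i)}$) and uses vertex-transitivity together with Lemma~2 to force these subspaces into a single compatible configuration, which combined with the local commuting relations yields that $\mathcal T$ is abelian (indeed $\mathcal T\cong F_3^{\,m}$). It is worth stressing that a purely group-theoretic argument from Lemma~2 cannot succeed: the group $\langle\sigma_u,\sigma_v,\sigma_w\rangle$ is a quotient of the affine Coxeter group $\widetilde A_2=\langle s_1,s_2,s_3\mid s_i^2=(s_is_j)^3=1\rangle$, in which the element $s_1s_2s_3$ (a product of three reflections whose fixed lines form a triangle) is a glide reflection of infinite order; thus the finiteness, and hence the whole conclusion, genuinely rests on the rigidity of the Euclidean representation that comes from $g=k$.
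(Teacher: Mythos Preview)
Your reduction of the statement to ``$\mathcal T$ is abelian'', equivalently ``any two triangle rotations commute'', is correct and clean. The problem is that after this reduction you do not actually prove anything: the case of two triangles through a common vertex is dispatched with a reference to an unspecified ``Gram-determinant computation'', and the general case you yourself flag as ``the real obstacle'' and then treat only by a vague allusion to fixed subspaces and vertex-transitivity. Neither step is carried out, and neither follows from what is written; the proposal is a strategy, not a proof.

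More seriously, your final claim that a purely group-theoretic argument from Lemma~2 cannot succeed is misleading, and it steered you away from the short route the paper takes. You are right that the bare relations $\sigma_i^2=(\sigma_i\sigma_j)^3=e$ only present the affine Coxeter group $\widetilde A_2$; but Lemma~2(v) together with the hypothesis $\lambda=1$ is a genuine extra relation on the graph, and it is exactly what finishes the argument without any further use of the Euclidean representation. The paper's proof goes like this. Assume first that $u$ and $v$ are adjacent and set
\[
g_1=\sigma_u\sigma_v,\qquad g_2=\sigma_{u\circ w}\,\sigma_{w\circ(u\circ v)},\qquad g_3=\sigma_w\,\sigma_{u\circ(v\circ w)}.
\]
Using only $\sigma_{x\circ y}=\sigma_x\sigma_y\sigma_x$ from Lemma~2(iii) and $(\sigma_x\sigma_y)^3=e$ from Lemma~2(iv), one checks directly that $g_1g_2g_3=e$. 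By Lemma~2(v), since $u\sim v$, each $g_i$ is a triangle rotation: $\{x,g_i(x),g_i^{-1}(x)\}$ is a triangle for every $x$. From $g_3(x)=g_2^{-1}\bigl(g_1^{-1}(x)\bigr)$ we see that $g_3(x)$ is adjacent to $g_1^{-1}(x)$; but $g_1(x)$ and $g_3(x)$ are then both common neighbours of the adjacent pair $x,\,g_1^{-1}(x)$, so $\lambda=1$ forces $g_1(x)=g_3(x)$ for all $x$, i.e.\ $g_1=g_3$, which unwinds to $(\sigma_v\sigma_w\sigma_u)^2=e$. The nonadjacent case follows by choosing a common neighbour $t$ of $u$ and $v$ and multiplying the three adjacent-case identities $(\sigma_t\sigma_w\sigma_u)^2=e$, $(\sigma_v\sigma_w\sigma_t)^2=e$, $(\sigma_t\sigma_v\sigma_u)^2=e$.

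In short: the gap in your argument is precisely the step you identify as the obstacle, and the fix is not more linear algebra but the combinatorial observation above, which uses $\lambda=1$ one more time beyond its role inside Lemma~2.
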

\begin{proof}
We may assume that $u,v,w$ are different vertices, as other cases
are covered by Lemma 2.


Denote $g_1=\sigma_u\sigma_v$, $g_2=\sigma_{u\circ
w}\sigma_{w\circ(u\circ v)}$, and $g_3=\sigma_w\sigma_{u\circ(v\circ
w)}$. Since $u\neq v$ then $u\circ w\neq w\circ(u\circ v)$, and
$w\neq u\circ(v\circ w)$. Hence, by Lemma 2 (iv), the automorphisms
$g_1$, $g_2$, and $g_3$ are of order $3$, and have no fixed points.
Moreover, we claim that $g_1g_2g_3=e$. Indeed, we have
\begin{gather*}
g_1=\sigma_u\sigma_v,\\
g_2=\sigma_{u\circ w}\sigma_{w\circ(u\circ v)}=\sigma_w(\sigma_u\sigma_w\sigma_w\sigma_u)\sigma_v\sigma_u\sigma_w=\sigma_w\sigma_v\sigma_u\sigma_w,\\
g_3=\sigma_w\sigma_{u\circ(v\circ
w)}=\sigma_w\sigma_u\sigma_w\sigma_v\sigma_w\sigma_u,
\end{gather*}
where we used $\sigma_{x\circ
y}=\sigma_{\sigma_x(y)}=\sigma_x\sigma_y\sigma_x$ repeatedly.
Therefore,
\begin{gather*}
g_1g_2g_3=\sigma_u\sigma_v\sigma_w\sigma_v(\sigma_u\sigma_w\sigma_w\sigma_u)\sigma_w\sigma_v\sigma_w\sigma_u=\\
\sigma_u\sigma_v\sigma_w\sigma_v\sigma_w\sigma_v\sigma_w\sigma_u=\sigma_u(\sigma_v\sigma_w)^3\sigma_u=\sigma_u^2=e.
\end{gather*}

Let us first suppose that $u$ and $v$ are adjacent. Then, by Lemma 2
(v), the vertices $u\circ w$ and $w\circ (u\circ v)$ are also
adjacent. Similarly, $w$ and $u\circ (v\circ w)$ are adjacent. Using
again Lemma 2 (v), we see that $\{x,g_i(x),g_i^{-1}(x)\}$ is a
triangle for any $x\in V$ and $i\in\{1,2,3\}$. From $g_1g_2g_3=e$ we
find that $g_3(x)=g_2^{-1}(g_1^{-1}(x))$, therefore $g_1^{-1}(x)$
and $g_3(x)$ are adjacent. Hence $x$ and $g_1^{-1}(x)$ are also
adjacent, and have two common neighbors $g_1(x)$ and $g_3(x)$. Since
$\lambda=1$ then $g_1(x)=g_3(x)$. The choice of $x$ was arbitrary,
therefore $g_1=g_3$. Thus we obtain
\begin{gather*}
g_1=g_3 \Leftrightarrow \sigma_u\sigma_v=\sigma_w\sigma_u\sigma_w\sigma_v\sigma_w\sigma_u \Leftrightarrow\\
\sigma_v(\sigma_u\sigma_w)^2\sigma_v\sigma_w\sigma_u=e
\Leftrightarrow (\sigma_v\sigma_w\sigma_u)^2=e,
\end{gather*}
as claimed.

Now, suppose that $u$ and $v$ are nonadjacent. Then there exists
$t\in V$ adjacent to both $u$ and $v$. By the previous case, we have
\begin{gather*}
(\sigma_t\sigma_w\sigma_u)^2=e,\\
(\sigma_v\sigma_w\sigma_t)^2=e,\\
\sigma_t\sigma_v\sigma_u\sigma_t=\sigma_u\sigma_v,
\end{gather*}
where the last equation is another form of
$(\sigma_t\sigma_v\sigma_u)^2=e$. Multiplying the first two
equations together, we get
\begin{gather*}
e=(\sigma_v\sigma_w\sigma_t)^2(\sigma_t\sigma_w\sigma_u)^2=\\
\sigma_v\sigma_w\sigma_t\sigma_v(\sigma_w\sigma_t\sigma_t\sigma_w)\sigma_u\sigma_t\sigma_w\sigma_u=\\
\sigma_v\sigma_w(\sigma_t\sigma_v\sigma_u\sigma_t)\sigma_w\sigma_u=\\
\sigma_v\sigma_w\sigma_u\sigma_v\sigma_w\sigma_u=(\sigma_v\sigma_w\sigma_u)^2.
\end{gather*}
Lemma 3 is proved.
\end{proof}

\begin{proof}[Proof of Proposition 2]
Now we can introduce a vector space structure on $V$. Take some
vertex $v_0\in V$ and define it to be the zero vector. Denote the
multiplication of a vector $v\in V$ by scalars from $F_3$ as follows
$$
1v=v, \quad 0v=v_0, \quad \text{and}\quad (-1)v=\sigma_{v_0}(v).
$$
Finally, for each $u,v\in V$ define vector addition by
$$u+v=\sigma_{v_0}(u\circ v).$$ We only need to check that these
operations indeed define a vector space over $F_3$. First, let us
show the associativity of the vector addition. For arbitrary $u$,
$v$, $w\in V$, we have
\begin{gather*}
u+(v+w)=(u+v)+w \Leftrightarrow \sigma_{v_0}(u\circ(v+w))=\sigma_{v_0}((u+v)\circ w) \Leftrightarrow \\
u\circ(v+w)=(u+v)\circ w \Leftrightarrow \sigma_u\sigma_{v_0}(v\circ w)=\sigma_w\sigma_{v_0}(v\circ u) \Leftrightarrow \\
\sigma_u\sigma_{v_0}\sigma_w(v)=\sigma_w\sigma_{v_0}\sigma_u(v)\Leftrightarrow
(\sigma_u\sigma_{v_0}\sigma_w)^2(v)=v,
\end{gather*}
which is true by Lemma 3. The rest of axioms are easily deduced from
Lemma 2. Namely, for each $u$, $v\in V$ we have
$$
u+v=\sigma_{v_0}(u\circ v)=\sigma_{v_0}(v\circ u)=v+u,
$$
$$
v+v_0=\sigma_{v_0}\sigma_{v_0}(v)=v, \quad
(-1)((-1)v)=\sigma_{v_0}\sigma_{v_0}(v)=v,
$$
$$
v+v=\sigma_{v_0}\sigma_{v}(v)=\sigma_{v_0}v=(-1)v,\quad v+(-1)v=
\sigma_{v_0}\sigma_{v}\sigma_{v_0}(v)=\sigma_{v}\sigma_{v_0}\sigma_{v}(v)=v_0,
$$
and
$$
-(u+v)=\sigma_{v_0}\sigma_{v_0}(u\circ
v)=\sigma_{u}(v)=\sigma_{v_0}(u)+\sigma_{v_0}(v)=(-1)u+(-1)v.
$$
Proposition 2 is proved.
\end{proof}

As a corollary, we see that each strongly regular graph in the
family must have $3^m$ vertices, $m\in{\mathbb N}$. For example,
there is no strongly regular graph with parameters $(289,54,1,12)$.
\section{Proof of main results}
Now we are ready to prove Theorem 1.
\begin{proof}[Proof of Theorem 1]
By Propositions 1 and 2 it remains to show that the diophantine
equation $n^2+3n-1=3^m$ has only the following integer solutions
$(n,m)$ with $n>0$:
$$ (1,1),\quad (2,2),\quad (4,3).$$
Substituting $u=2n+3$ we obtain an equivalent equation
$$
u^2-13=4\cdot3^m.
$$
By~\cite[Theorem B]{BCHMW}, this equation can hold only for $m=1,2$
or $3$.

\end{proof}
\noindent With some more work we can prove that there exists a
unique strongly regular graph with parameters $(729,112,1,20)$,
namely the Games graph.

\begin{proof}[Proof of Theorem 2] Let $\Gamma=(V,E)$ be a strongly regular
graph with parameters $(729,112,1,20)$. By Proposition 2 we may
assume that $V=F_3^6$, and for any $x\in V$ the map
$\sigma_x:t\rightarrow -(t+x)$ is a graph automorphism. Since
$\sigma_0\sigma_x(t)=t+x$, we have that all shifts $t\rightarrow
x+t$ are also automorphisms. Therefore $x$ and $y$ are adjacent if
and only if $0$ and $x-y$ are adjacent. The last implies that $0$
and $x$ are adjacent if and only if $0$ and $-x$ are adjacent.
Hence, we can form a set $H\subset PG(5,F_3)$ by saying $[x]\in H$
if and only if $0$ and $x$ are adjacent ($[x]$ is the equivalence
class of $x\in F_3^6\setminus\{0\}$ in $PG(5,F_3)$). The set $H$
consists of $56$ points. Now, we claim that $H$ is a cap, i.e., any
line in $PG(5,F_3)$ meets $H$ in at most two points. Indeed, it is
sufficient to consider three collinear points $[x],[y],[x+y]\in H$.
Then $0$ is connected to $x$, $y$, and $x+y$. Similarly, $x+y$ is
connected to $x$, $y$, and $0$. This contradicts the fact that
$\lambda=1$, as $0$ and $x+y$ are connected and have $x$ and $y$ as
common neighbors. This proves that $H$ is a cap. It was shown by
Hill \cite{Hill} that there exists a unique cap with $56$ points in
$PG(5,F_3)$, and therefore $H$ is isomorphic to it. Hence $\Gamma$
is isomorphic to the Games graph, as can be seen from the
construction given in \cite[p.114-115]{BrvL}.

\end{proof}

{\footnotesize \noindent Centre de Recerca Matem\`atica, Campus de
Bellaterra, 08193
Bellaterra (Barcelona), Spain\\
and\\
Department of Mathematical Analysis, National Taras Shevchenko
University, str.\ Volodymyrska, 64, Kyiv, 01033, Ukraine\\
{\it Email address: andriybond@gmail.com}\\
\vspace{0.5cm}

\noindent Department of Mathematical Analysis, National Taras
Shevchenko University, str.\ Volodymyrska, 64, Kyiv, 01033, Ukraine\\
{\it Email address: danradchenko@gmail.com}\\


\begin{thebibliography}{00}
\bibitem{B} A. E. Brouwer, {\em Parameters of strongly regular graphs}, tables published electronically
at http://www.win.tue.nl/~aeb/graphs/srg/srgtab.html.
\bibitem{G} R. Griess, Twelve sporadic groups, Springer Monographs in
Mathematics, 1998.
\bibitem{CEKS} H. Cohn, N. Elkies, A. Kumar, and A. Sch\"urmann, Point configurations that are asymmetric yet
balanced, Proc. Amer. Math. Soc. 138 (2010) 2863-2872.
\bibitem{HS} A. J. Hoffman, R. R. Singleton, On Moore graphs with diameters 2
and 3, IBM J. Res. Dev. 4 (1960) 497–504.
\bibitem{C} P. Cameron, Permutation Groups, Cambridge University Press, 1999.
\bibitem{MS}
M. Ma\v{c}aj, J. \v{S}ir\'a\v{n}, Search for properties of the
missing Moore graph, Linear Algebra and Its Applications, 432 (2010)
2381-2398.
\bibitem{MN} A. A. Makhnev, V. V. Nosov, On automorphisms of strongly regular graphs
with $\lambda=0$ and $\mu=3$, St. Petersburg Math. J., 21 (2010)
779-790.
\bibitem{BrHa} A.E.~Brouwer, W.H.~Haemers, Structure and uniqueness of the (81,20,1,6) strongly regular graph,
Discrete Math. 106/107 (1992) 77-82.
\bibitem{BCHMW} A. Bremner, R.
Calderbank, P. Hanlon, P. Morton, and J. Wolfskill, Two-weight
ternary codes and the equation $y^2= 4.3^a+13$, J. Number Theory 16
(1983) 212-234.
\bibitem{CK} R. Calderbank , W. M. Kantor, {\em The geometry of two-weight
codes}, Bulletin of the London Mathematical Society, 18 (1986)
97-122.
\bibitem{Cam} P. Cameron, Strongly regular graphs, in Selected Topics in
Algebraic Graph Theory (eds. L.W. Beineke and R.J. Wilson),
Cambridge Univ. Press, 2004.
\bibitem{Hill}R.~Hill, Caps and codes, Discrete Math. 22 (1978) 111–-137.
\bibitem{BrvL} A.E.~Brouwer, J.H.~van~Lint,
{\em Strongly regular graphs and partial geometries}, Enumeration
and Design - Proc. Silver Jubilee Conf. on Combinatorics, Waterloo,
1982, pp. 85-122.

\end{thebibliography}
\end{document}